\documentclass[%draft,%
12pt,oneside,english,a4paper]{amsart}
\usepackage[T1]{fontenc}
\usepackage[utf8]{inputenc}
\usepackage[dvipsnames]{xcolor}
\usepackage[yyyymmdd,hhmmss]{datetime}
\usepackage[numbers,sort,compress]{natbib}
\usepackage{amstext,amsthm,amssymb}
\usepackage{stmaryrd}
\usepackage{xparse,mathrsfs,mathtools}
\usepackage{tikz}
\usetikzlibrary{backgrounds,patterns}
\usepackage[english]{babel}
\usepackage{xr-hyper}
\usepackage[pdftex,unicode=true,%
pdfusetitle,pdfdisplaydoctitle=true,%
pdfpagemode=UseOutlines,%
%colorlinks=true,%ocgcolorlinks,%
bookmarks=true,bookmarksnumbered=false,bookmarksopen=true,bookmarksopenlevel=2,%
breaklinks=true,%
pdfencoding=unicode,psdextra,
pdfcreator={},
pdfborder={0 0 0},
colorlinks=false
]{hyperref}
\usepackage{lmodern,microtype}
\usepackage{subcaption}
\usepackage[dvipsnames]{xcolor}
\usepackage{geometry}
\usepackage[capitalise,nameinlink]{cleveref}
\usepackage[cal=zapfc,bb=ams,frak=esstix,scr=boondoxo]{mathalfa}

% https://tex.stackexchange.com/questions/30243/arxiv-org-remove-borders-around-hyperlinks
\hypersetup{
	colorlinks=false,
	pdfborder={0 0 0},
	pdfborderstyle={/S/U/W 0},
}

% -------------------------------------------------------
%\usepackage{refcheck}
%\makeatletter
%\newcommand{\refcheckize}[1]{%
	%	\expandafter\let\csname @@\string#1\endcsname#1%
	%	\expandafter\DeclareRobustCommand\csname relax\string#1\endcsname[1]{%
		%		\csname @@\string#1\endcsname{##1}\wrtusdrf{##1}}%
	%	\expandafter\let\expandafter#1\csname relax\string#1\endcsname
	%}
%\makeatother
%\refcheckize{\cref}\refcheckize{\Cref}\refcheckize{\cpageref}

% -------------------------------------------------------
%\raggedbottom
\newtheorem*{Theorem*}{Theorem}
\newtheorem{Theorem}{Theorem}[section]
\theoremstyle{remark}
\newtheorem*{Remark*}{Remark}

\NewDocumentCommand\e{ s O{} m }{%
	\IfBooleanTF{#1}{%
		\operatorname{e}_{#2}\parentheses*{#3}%
	}{\operatorname{e}_{#2}\parentheses{#3}}%
}

\DeclarePairedDelimiter\parentheses{\lparen}{\rparen}
\DeclarePairedDelimiter\braces{\lbrace}{\rbrace}

 % requires `stmaryrd'
\NewDocumentCommand\set{ s o m o }{%
	\IfBooleanTF{#1}{\IfNoValueTF{#4}{\braces*{#3}}{\braces*{\,#3:#4\,}}}{%
		\IfNoValueTF{#2}{\IfNoValueTF{#4}{\braces{#3}}{\braces{\,#3:#4\,}}}{%
			\IfNoValueTF{#4}{\braces[#2]{#3}}{\braces[#2]{\,#3:#4\,}}}}%
}

\crefname{section}{§}{§§}
\crefname{figure}{Figure}{Figures}
\crefformat{equation}{#2(#1)#3}
\crefformat{enumi}{#2(#1)#3}
\numberwithin{equation}{section}
\multlinegap=0pt

% --------------------------------------------------------------------
% subcaption stuff
\captionsetup[figure]{labelfont=bf}
\captionsetup[subfigure]{subrefformat=simple, labelformat=simple, labelfont=bf}

\makeatletter
\renewcommand\p@subfigure{\thefigure~}
\makeatother

% -------------------------------------------------------
\usepackage{xifthen,ifthen}
\makeatletter
\newcounter{@ToDo}
\newcommand{\todo@helper}[1]{%
	({\color{blue}TODO~\arabic{@ToDo}: {#1\@addpunct{.}}})%
}
\newcommand{\todo}[1]{\stepcounter{@ToDo}%
	\relax\ifmmode\text{\todo@helper{#1}}%
	\else\todo@helper{#1}\fi%
}
\makeatother

% -------------------------------------------------------
\title[Continuous and discrete universality]{Continuous and discrete universality of zeta-functions: Two sides of the same coin?}
\date{\today{}}

\makeatletter\@namedef{subjclassname@2020}{\textup{2020} Mathematics Subject Classification}\makeatother
\subjclass[2020]{% See http://www.ams.org/msc/msc2020.html
	Primary
	11M06,
	11M35; % Continued fractions (in 11Axx Elementary number theory)
	Secondary
	47Axx, 
    37B20 % Diophantine inequalities
}
\keywords{Riemann zeta-function, Hurwitz zeta-function, universality, linear dynamics, strong recurrence}

\author{Athanasios~Sourmelidis}

\address{
	Athanasios~Sourmelidis\\%
	Institut für Analysis und Zahlentheorie\\%
	TU~Graz\\%
	Steyrergasse 30\\%
	8010~Graz\\%
	Austria}
\email{sourmelidis@math.tugraz.at}

\thanks{%
	The author is supported by FWF project~M~3246-N
}

% -------------------------------------------------------
\begin{document}
\begin{abstract}
	In 1975 Voronin proved the universality theorem for the Riemann zeta-function $\zeta(s)$ which roughly says that any admissible function $f(s)$ is approximated by $\zeta(s)$. 
	A few years later Reich proved a discrete analogue of this result.
	The proofs of these theorems are almost identical but it is not known whether  one of them implies the other.
	We will see that if we translate the question in the language of linear dynamics  then there is a link which we exploit to obtain in a straightforward way a big variety of discrete universality results appearing in the literature.
\end{abstract}
\maketitle
\section{Introduction and Main results}
\subsection{Introduction}
The Riemann zeta-function  $\zeta(s)$ is defined for every complex number $s:=\sigma+it$ with $\sigma>1$ by the following two expressions
\[
\zeta(s):=\sum_{n\geq1}\frac{1}{n^s}=\prod_{p}\parentheses*{1-\frac{1}{p^s}}^{-1}.
\]
In his pathbreaking memoir \cite{Riemann}, Riemann showed that $\zeta(s)$ has a meromorphic continuation to the whole complex plane $\mathbb{C}$ with a simple pole at $s=1$ and that it satisfies a certain functional equation  connecting $\zeta(s)$ with $\zeta(1-s)$.
This in turn implies that the only real zeros of $\zeta(s)$, the so called {\it trivial zeros}, are the negative even integers and Riemann conjectured that all complex zeros of $\zeta(s)$, the so called {\it non-trivial zeros}, lie on the vertical line $1/2+i\mathbb{R}$ (Riemann hypothesis).

In 1975 Voronin \cite{Voronin1975} proved the following:

{\it If $0<r<1/4$ and $f:\lbrace s\in\mathbb{C} :|s|\leq r\rbrace\to\mathbb{C}$ is a non-vanishing continuous function which is analytic in the interior of the disk, then for every $\varepsilon>0$ there is $\tau>0$ such that}
\begin{align}\label{Voroninuniversailty}
\max_{|s|\leq r}|\zeta(s+3/4+i\tau)-f(s)|<\varepsilon.
\end{align}
In the proof it is already implicit that the set of those $\tau>0$ satisfying \eqref{Voroninuniversailty} has positive lower density in the non-negative real numbers $\mathbb{R}_+$. 
Recall that a set $A\subseteq\mathbb{R}_+$ has {\it positive lower density} if
\[
\liminf_{T\to\infty}\frac{1}{T}\mathrm{meas}(A\cap[0,T])>0,
\]
where $\mathrm{meas}(A)$ denotes the Lebesgue measure of the set $A$.

Since $\zeta(s)$ approximates a large family of analytic functions, the above result has been known ever since as the {\it (continuous) universality theorem} for $\zeta(s)$.
Voronin's theorem has been refined and generalized within the decades for other zeta- and $L$-functions. 
There is an exhaustive literature of results indicating that the universality property is rather the norm than the exception when one deals with a Dirichlet series satisfying some natural (for applications to number theory) conditions.
We refer to  \cite{Queffelecbook,Laurincikasbook,Matsumotosurvey,Steudingbook} for a survey of such results.

In 1979/1981 Gonek \cite{Gonekthesis} and (independently) Bagchi \cite{Bagchithesis, Bagchi1982} generalized Voronin's  theorem in several directions. 
For instance, they improved his theorem by replacing the disc centered at $3/4$ and having radius $0<r<1/4$, by any compact set $K$ of the strip $\mathcal{D}:=\lbrace s\in\mathbb{C}:1/2<\sigma<1\rbrace$ that has connected complement.
\begin{Remark*}
	We take the chance here to introduce some notation in order to simplify the upcoming exposition.
	A tuple $(K,f,\varepsilon)$ will be called {\it admissible} if $K\subseteq\mathcal{D}$ is compact with connected complement, $f:K\to\mathbb{C}$ is continuous in $K$ and analytic in its interior and $\varepsilon>0$.
	A tuple $(K,f,\varepsilon)^*$ will also be called admissible if the previous assumptions hold in addition to $f$ being non-vanishing in $K$. 
\end{Remark*}

Gonek considered another generalization of $\zeta(s)$, namely the Hurwitz zeta-function, which is defined by
\[
\zeta(s;\alpha):=\sum_{n\geq0}\frac{1}{(n+\alpha)^s},\quad\sigma>1,\quad\alpha\in(0,1].
\]
Like $\zeta(s)$, the Hurwitz zeta-function has a meromorphic continuation to  $\mathbb{C}$ with a simple pole at $s=1$.
Gonek showed that if the parameter $\alpha$ is a fixed transcendental or rational number $\neq1/2,1$ then the following holds: 

{\it If $(K,f,\varepsilon)$ is admissible, then}
\begin{align}\label{Hurwitzuniversal}
\liminf_{T\to\infty}\frac{1}{T}\mathrm{meas}\left\{\tau\in[0,T]:\max_{s\in K}|\zeta(s+i\tau;\alpha)-f(s)|<\varepsilon\right\}>0.
\end{align}
Observe that in this case the target function $f$ is allowed to have zeros in $K$ and this is why we talk about {\it (continuous) strong universality}.

We will see later on that this property of the Hurwitz zeta-function is equivalent to saying that the set $\lbrace\zeta(\cdot+i\tau;\alpha):\tau\geq0\rbrace$
is dense in the space of analytic functions $f:\mathcal{D}\to\mathbb{C}$, $H(\mathcal{D})$, equipped with the topology of uniform convergence on compact sets.
In particular, this set intersects any open subset of $H(\mathcal{D})$ {\it frequently often}  as the positive lower density statement implies (we will give a precise formulation of what this means in the following section).
In this context and in view of Voronin's theorem, we also have that $\lbrace\zeta(\cdot+i\tau):\tau\geq0\rbrace$ intersects frequently often any open subset of the subspace $H^*(\mathcal{D}):=\lbrace f\in H(\mathcal{D}):f\text{ is non-vanishing}\rbrace$ which is endowed with the induced topology.

At the same time with Gonek and Bagchi, Reich \cite{Reich1980} showed the discrete analogue of Voronin's theorem:

{\it Let $h>0$. If $(K,f,\varepsilon)^*$ is admissible, then
\begin{align}\label{discreteVoroninuniversailty}
	\liminf_{N\to\infty}\frac{1}{N}\#\left\{n\leq N:\max_{s\in K}|\zeta(s+ihn)-f(s)|<\varepsilon\right\}>0,
\end{align}
where $\# A$ denotes the cardinality of the set $A\subset \mathbb{N}$.}

In other words, the set $\lbrace\zeta(\cdot+ihn):n\geq1\rbrace$ intersects frequently often any open subset of $H^*(\mathcal{D})$.
In this case we talk about {\it $h$-discrete universality}  while for the Hurwitz zeta-function one has {\it $h$-discrete strong universality} (see for example \cite{LaurincikasMacaitiene2009}).

\subsection{Main results}
The proofs of continuous and $h$-discrete (strong) universality coincide up to some point and then it seems that the two methods, though similar, are not directly connected.
As a matter of fact, in most cases (including the one of the Hurwitz zeta-function) the $h$-discrete universality is obtained for all $h>0$ except for a set of zero Lebesgue measure.
As a consequence there are many results  which treat these phenomena separately. 
Particularly, if the universality property for a zeta- or an $L$-function is established, then shortly after its discrete analogue appears in the literature as well. 
So it is natural to ask if there is a connection between these concepts and whether one of them implies the other.
We will see that if we translate this question in the language of linear dynamics, then there is an affirmative answer which we can employ to prove the following theorems.
\begin{Theorem}\label{ContDiscHurw}
	Let $h>0$. 
	The continuous strong universality of the Hurwitz zeta-function implies its $h$-discrete strong universality.
	Conversely, the $h$-discrete strong universality implies \eqref{Hurwitzuniversal} with $\limsup$ in place of $\liminf$.
\end{Theorem}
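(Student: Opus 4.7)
My plan is to recast both universality statements as assertions of frequent hypercyclicity for a $C_{0}$-semigroup on the Fr\'echet space $H(\mathcal{D})$ of analytic functions on $\mathcal{D}$, equipped with the topology of uniform convergence on compact sets, and then transfer between the continuous and discrete flows by appealing to abstract linear-dynamical tools. Let $T_{\tau}g(s):=g(s+i\tau)$ for $\tau\geq 0$; this defines a $C_{0}$-semigroup of continuous operators on $H(\mathcal{D})$. Basic open neighbourhoods of $H(\mathcal{D})$ have the form $U(K,f,\varepsilon):=\braces{g:\max_{s\in K}\abs{g(s)-f(s)}<\varepsilon}$, and by Mergelyan's theorem (using that $K$ has connected complement) the admissible tuples already furnish a basis of the topology. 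In this language, \eqref{Hurwitzuniversal} asserts exactly that $\zeta(\cdot;\alpha)$ is a frequently hypercyclic vector of $(T_{\tau})_{\tau\geq 0}$, while \eqref{discreteVoroninuniversailty} is the same statement for the single operator $T_{h}$.

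For the converse direction (discrete $\Rightarrow$ continuous with $\limsup$) I would use a soft $\delta$-thickening. Given an admissible $(K,f,\varepsilon)$, I thicken $K$ to a compact $K'\supset K$ with $K\subset\mathrm{int}(K')$; a Cauchy estimate on $K'$ then furnishes $\varepsilon',\delta>0$ such that
\[
T_{t}\,U(K',f,\varepsilon')\subseteq U(K,f,\varepsilon)\quad\text{for all }\abs{t}<\delta.
\]
Under the discrete hypothesis the set $S:=\braces{n\in\NN:T_{hn}\zeta(\cdot;\alpha)\in U(K',f,\varepsilon')}$ has positive lower density, so
\[
\bigcup_{n\in S}(hn-\delta,hn+\delta)\subseteq\braces{\tau\geq 0:T_{\tau}\zeta(\cdot;\alpha)\in U(K,f,\varepsilon)},
\]
and the Lebesgue measure of the left-hand side inside $[0,T]$ is at least $\min(2\delta,h)\cdot\abs{S\cap[0,T/h]}$. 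This already yields positive lower density of the continuous hitting set, and in particular the desired $\limsup$ statement.

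The forward direction is the main obstacle, because the $\delta$-thickening cannot be reversed: the ``good'' continuous times $\tau$ with $T_{\tau}\zeta(\cdot;\alpha)\in U(K,f,\varepsilon)$ need not cluster near $h\NN$, and the intervals $(\tau-\delta,\tau+\delta)$ around them may systematically miss every integer multiple of $h$, so no measure-theoretic argument transfers positive density from $\RR_{+}$ to $h\NN$ directly. Instead I would invoke a transfer theorem from the theory of frequently hypercyclic $C_{0}$-semigroups (in the spirit of Conejero--M\"uller--Peris for plain hypercyclicity): any frequently hypercyclic $C_{0}$-semigroup $(T_{\tau})_{\tau\geq 0}$ on a separable Fr\'echet space shares its frequently hypercyclic vectors with every discretization $T_{h}$, $h>0$. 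Specialising this to our translation semigroup on $H(\mathcal{D})$ and to $x=\zeta(\cdot;\alpha)$ converts continuous strong universality into $h$-discrete strong universality at once, and completes the proof.
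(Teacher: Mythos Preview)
Your forward direction is exactly the paper's argument: recast continuous strong universality as $\zeta(\cdot;\alpha)\in\mathrm{FHC}(\mathcal{T})$ for the translation semigroup on $H(\mathcal{D})$, then invoke Conejero--M\"uller--Peris to pass to $\mathrm{FHC}(T_h)$. One remark: you hedge by calling the needed transfer result a theorem ``in the spirit of'' Conejero--M\"uller--Peris, as if the frequent case were separate. In fact the FHC inclusion $\mathrm{FHC}(\mathcal{T})\subseteq\mathrm{FHC}(T_h)$ is part of the Conejero--M\"uller--Peris theorem as stated in the paper, so no additional tool is required.

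Your converse direction is genuinely different from the paper's and actually yields more. The paper argues indirectly: $h$-discrete strong universality forces $\zeta(\cdot;\alpha)$ to be strongly recurrent for $T_h$, Gottschalk--Hedlund transfers this to strong recurrence for the semigroup $\mathcal{T}$ (a $\limsup$ statement by definition), and then one combines a single good discrete shift $ihN$ with the recurrence to approximate a given $f$, which is why only $\limsup$ survives. Your $\delta$-thickening is more elementary and preserves the full $\liminf$: each good discrete time $hn$ spawns an interval $(hn-\delta,hn+\delta)$ of good continuous times, and positive lower density of $S\subseteq\NN$ transfers directly to positive lower density in $\RR_+$. So you in fact recover \eqref{Hurwitzuniversal} itself, not merely its $\limsup$ weakening. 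The price is that your argument is specific to translation semigroups (it uses that $T_t$ moves $K$ inside $K'$), whereas the paper's route through Gottschalk--Hedlund works for abstract $C_0$-semigroups.

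One small gap to patch: as written, $U(K',f,\varepsilon')$ is not well defined, since $f$ is only given on $K$, not on the thickening $K'$. You should first apply Mergelyan (which you already invoke in the setup) to replace $f$ by a polynomial $P$ with $\max_{K}\abs{P-f}<\varepsilon/2$, run the thickening with $U(K',P,\varepsilon')$, and then use the triangle inequality at the end. With that adjustment your Cauchy-estimate step goes through cleanly.
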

\begin{Theorem}\label{ContDiscRiemacon}
	Let $h>0$.
	Assuming the Riemann hypothesis, the continuous universality of the Riemann zeta-function is equivalent to its $h$-discrete universality.
	\end{Theorem}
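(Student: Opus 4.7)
The plan is to recast both forms of universality as frequent-universality statements for the translation $C_0$-semigroup $(T_\tau)_{\tau\ge 0}$ on the separable Fréchet space $H(\mathcal{D})$ (equipped with the compact-open topology), where $T_\tau g(s):=g(s+i\tau)$, and then to invoke the equivalence between continuous-time and discrete-time frequent hypercyclicity for such semigroups.

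The Riemann hypothesis enters only to guarantee that $\zeta$ has no zeros on $\mathcal{D}$, which confines the entire orbit $\{T_\tau\zeta:\tau\ge 0\}$ to the open, translation-invariant subset $H^*(\mathcal{D})\subset H(\mathcal{D})$. Continuous universality of $\zeta$ then amounts to saying that $\zeta$ is a frequently universal vector for the flow $(T_\tau)_{\tau\ge 0}$ viewed on $H^*(\mathcal{D})$: for every non-empty open $U\subseteq H^*(\mathcal{D})$, the return set $\{\tau\ge 0:T_\tau\zeta\in U\}$ has positive lower density in $\RR_+$. Likewise, $h$-discrete universality reads as frequent universality of $\zeta$ for the single operator $T_h$ on $H^*(\mathcal{D})$.

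At this point the claimed equivalence becomes a purely linear-dynamical statement, which I would deduce from the frequent-hypercyclicity analogue of the Conejero--Müller--Peris theorem: for a $C_0$-semigroup of continuous linear operators on a separable Fréchet space, a vector is frequently hypercyclic for the whole semigroup if and only if it is frequently hypercyclic for the time-$t_0$ operator $T_{t_0}$ for some (equivalently, every) $t_0>0$. Taking $t_0=h$, together with the facts that open subsets of $H^*(\mathcal{D})$ are automatically open in $H(\mathcal{D})$ and that the orbit of $\zeta$ never leaves $H^*(\mathcal{D})$, yields the desired equivalence.

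The main obstacle is this last step: justifying the Conejero--Müller--Peris-type equivalence for \emph{frequent} hypercyclicity while working inside the non-linear open subset $H^*(\mathcal{D})$ rather than on the full Fréchet space. Concretely, one has to verify that each good discrete time $hn$ can be inflated, via local equicontinuity of $(T_\tau)$ on compact time-intervals, into a positive-measure interval of good continuous times landing in the same open set, and conversely that good continuous times can be thinned to a good $h\ZZ$-lattice while preserving positive lower density, with all the relevant open sets chosen within $H^*(\mathcal{D})$. This is precisely what fails for the bare $\zeta$ in the absence of RH and what produces only a $\limsup$ statement in the Hurwitz case of \cref{ContDiscHurw}.
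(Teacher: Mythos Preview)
Your proposal correctly identifies the linear-dynamical framework and the role of the Riemann hypothesis in keeping the orbit inside $H^*(\mathcal{D})$, but there is a genuine gap in the forward direction (continuous $\Rightarrow$ discrete). You invoke the Conejero--M\"uller--Peris theorem, yet that result is stated and proved for $C_0$-semigroups of \emph{linear} operators on a topological vector space. On the full Fr\'echet space $H(\mathcal{D})$ the vector $\zeta$ is \emph{not} frequently hypercyclic (it cannot approximate functions with zeros), so CMP cannot be applied to $\zeta$ there; and $H^*(\mathcal{D})$, while $T_\tau$-invariant, is not a linear space, so CMP does not apply to the restricted semigroup either. You flag this as ``the main obstacle'' but then leave it unresolved---and the paper's Remark after the proof explains precisely why the CMP argument breaks down without the linear combinations $x+\lambda T_\tau x$.

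The paper's key move, which you are missing, is to replace $\zeta$ by $\log\zeta$. Under RH this is a genuine element of $H(\mathcal{D})$, and continuous universality of $\zeta$ translates (via relation~\cref{logRie}) into \emph{strong} continuous universality of $\log\zeta$, i.e.\ $\log\zeta\in\mathrm{FHC}(\mathcal{T})$ on the whole Fr\'echet space $H(\mathcal{D})$. Now CMP applies verbatim to give $\log\zeta\in\mathrm{FHC}(T_h)$, and exponentiating yields the $h$-discrete universality of $\zeta$. For the converse direction the paper does not use your ``inflation via joint continuity'' idea (which, incidentally, would work) but instead combines the Gottschalk--Hedlund theorem on strong recurrence with Bagchi's equivalence, again passing through $\log\zeta$; this is what upgrades the $\limsup$ one gets in the Hurwitz case to a $\liminf$ here.
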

It will be seen that in the case of strong universality the theorem is in fact a consequence of the {Conejero-M\"uller-Peris theorem} and a classic result on strong recurrence due to Gottschalk and Hedlund.
The theorem for the Riemann zeta-function will follow by the Conejero-M\"uller-Peris theorem and an equivalent formulation of the Riemann hypothesis with the strong recurrence property of $\zeta(s)$ due to Bagchi.
In the end we present some heuristics on why the Conejero-M\"uller-Peris theorem is not applicable if we assume the existence of hypothetical non-trivial zeros of $\zeta(s)$ off the vertical line $1/2+i\mathbb{R}$.
Nevertheless, we will employ an older of result due to Oxtoby and Ulam to obtain a  much weaker result for $\zeta(s)$ unconditionally.
\begin{Theorem}\label{ContDiscRiem}
	The continuous universality of the Riemann zeta-function implies the existence of a dense $G_\delta$-set $J\subseteq\mathbb{R}_+$ such that if $t_0\in J$, then for any admissible tuple $(K,f,\varepsilon)^*$, there is a sequence $(n_k)_{k\geq1}\subseteq\mathbb{N}$ such that
	\[
	\max_{s\in K}|\zeta(s+it_0n_k)-f(s)|<\varepsilon,\quad k\geq1.
	\]
\end{Theorem}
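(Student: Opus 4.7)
The plan is to translate the problem into topological dynamics on the Polish space $X := H^*(\mathcal{D})$, equipped with the topology of uniform convergence on compact subsets of $\mathcal{D}$, on which the vertical-translation semiflow $T_\tau g(s) := g(s+i\tau)$ acts continuously. The hypothesis of continuous universality says precisely that for each admissible tuple $(K,f,\varepsilon)^*$ the return set
\[
W(K,f,\varepsilon):=\{\tau\geq 0: \max_{s\in K}|\zeta(s+i\tau)-f(s)|<\varepsilon\}
\]
is an \emph{open} subset of $\mathbb{R}_+$ of \emph{positive lower density}; the theorem will follow from a Baire category argument in the spirit of Oxtoby and Ulam.

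First I would fix a countable base $\{V_i\}_{i\geq 1}$ of the topology of $X$ whose elements have the form $V_i = \{g\in X : \max_{s\in K_i}|g(s)-f_i(s)|<\varepsilon_i\}$ for admissible tuples $(K_i,f_i,\varepsilon_i)^*$. Such a base exists because $X$ is second countable, its points are by definition non-vanishing, and one may take the $K_i$ with connected complement from an exhaustion of $\mathcal{D}$ together with a countable dense family of target functions in $X$ and rational $\varepsilon_i$. Writing $W_i:=W(K_i,f_i,\varepsilon_i)$, define
\[
J_i := \{t_0>0 : n t_0\in W_i\text{ for infinitely many }n\in\mathbb{N}\} = \bigcap_{N\geq 1}\bigcup_{n\geq N}\tfrac{1}{n}W_i,
\]
which is a $G_\delta$ set because each rescaled set $\tfrac{1}{n}W_i$ is open.

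The key step is to show each union $\bigcup_{n\geq N}\tfrac{1}{n}W_i$ is dense in $\mathbb{R}_+$: given an open interval $I=(a,b)\subseteq\mathbb{R}_+$ and $N\geq 1$, it suffices to find $n\geq N$ with $nI\cap W_i\neq\emptyset$. Choose $n_0\geq\max(N,a/(b-a))$; then for $n\geq n_0$ the dilates overlap since $(n+1)a\leq nb$, and therefore $\bigcup_{n\geq n_0}nI=(n_0a,\infty)$. Positive lower density of $W_i$ forces it to be unbounded, so it meets $(n_0 a,\infty)$, hence some $nI$ with $n\geq n_0\geq N$. This makes every $J_i$ a dense $G_\delta$, and by Baire's theorem $J:=\bigcap_{i\geq 1}J_i$ is a dense $G_\delta$ in $\mathbb{R}_+$.

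To finish, take $t_0\in J$ and any admissible $(K,f,\varepsilon)^*$. The open neighborhood $\{g\in X:\max_K|g-f|<\varepsilon\}$ of $f$ in $X$ contains some $V_i$ from the base, and since $t_0\in J_i$ one extracts a sequence $(n_k)\subseteq\mathbb{N}$ with $T_{n_k t_0}\zeta\in V_i$, yielding the required approximations. I expect the main subtlety to be the density step for $J_i$: it is the positive lower density input—not merely density of the continuous orbit—that prevents a gap in $\mathbb{R}_+$ from defeating the dilation argument, and it is precisely this quantitative strengthening of Voronin's theorem that makes the Oxtoby–Ulam category argument applicable here without any linear structure on $X$.
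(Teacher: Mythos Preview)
Your argument is correct and is essentially the paper's own Oxtoby--Ulam/Baire category proof: a countable base of $H^*(\mathcal{D})$, the dilation observation $\bigcup_{n\geq n_0}(na,nb)=(n_0a,\infty)$ together with unboundedness of each return set $W_i$, and then an intersection over the base via Baire. The only cosmetic differences are that the paper uses the explicit base $\{e^{P}:P\in\mathbb{Q}[X]\}$ and defines its $J_m$ with ``$\exists\,n$'' rather than ``infinitely many $n$'', and your closing remark slightly overstates matters---the proof uses only that each $W_i$ is unbounded, for which mere hypercyclicity (not positive lower density) already suffices.
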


The proofs of the theorems are based on properties of dynamical systems.
Hence, we prove them for the simplest cases of $\zeta(s)$ and $\zeta(s;\alpha)$ with the purpose of exhibiting the method.
The main point of the present work is to show that any form of continuous universality for zeta-functions implies in one way or the other its discrete analogue.
\subsection{Structure and notations}
In Section \ref{dynam} we present the necessary material from the theory of linear dynamics that will be repeatedly employed in the sequel.
In Section \ref{Proofs} we show how the questions we want to address on zeta-functions can be translated in the language of linear dynamics and provide the proofs of the main results.
In the last two sections we state shortly further generalizations of the method introduced in earlier sections.
\section{Linear dynamics and strong recurrence}\label{dynam}
The interested reader can turn to \cite{Bayartbook,Erdmanbook} for a contemporary treatment of the theory of linear dynamics.
 
In the sequel  $X$ will denote a topological vector space over $\mathbb{C}$.
A continuous linear map $T:X\to X$ is called an {\it operator} and a vector $x\in X$ is called a {\it hypercyclic vector of $T$} if its orbit
\[
\mathrm{orb}(x,T):=\left\{T^nx:n\geq0\right\},
\]
is dense in $X$; here and in the sequel we use the usual convention from operator theory $Tx:=T(x)$ and 
\[
T^nx:=\underbrace{T\circ\dots\circ T}_{n\text{ times}}x.
\]
The set of hypercyclic vectors  is denoted by $\mathrm{HC}(T)$.
Hence, a vector $x$ is hypercyclic if for every open set $U\subseteq X$, there is $n\in\mathbb{N}$ such that $T^nx\in U$. 
If additionally we can show that
\[
\liminf_{N\to\infty}\frac{1}{N}\#\left\{n\leq N:T^nx\in U\right\}>0
\]
for every open set $U\subseteq X$, then $x$ is called a {\it frequently hypercyclic vector of $T$} and the set of frequently hypercyclic vectors will be denoted by $\mathrm{FHC}(T)\subseteq\mathrm{HC}(T)$.

In the continuous case, a one-parameter family $\mathcal{T} = (T_t)_{\tau\geq0}$ of operators
defined on $X$ is called a {\it strongly continuous semigroup} (or $C_0$-semigroup) if $T_0 = I$,
$T_t T_s = T_{t+s}$ for all $t, s \geq0$, and $\lim_{t\to s} T_t x = T_s x$ for all $s \geq 0$, $x \in X$.
In this setting, a vector $x\in X$ is called a {\it hypercyclic vector of $\mathcal{T}$} if $\mathrm{orb}(\mathcal{T} , x) := \left\{T_tx: t\geq0\right\}$ is dense in $X$ and a {\it frequently hypercyclic vector of} $\mathcal{T}$ if
\[
\liminf_{T\to\infty}\frac{1}{T}\mathrm{meas}\left\{t\in[0,T]:T_tx\in U\right\}>0
\]
for every open set $U\subseteq X$.
Completely analogously as in the discrete case, the set of hypercyclic, respect. frequently hypercyclic, vectors of the family $\mathcal{T}$ is denoted by $\mathrm{HC}(\mathcal{T})$, respect. $\mathrm{FHC}(\mathcal{T})\subseteq\mathrm{HC}(\mathcal{T})$. 

From the above definitions, it follows that $\mathrm{HC}(T_t)\subseteq\mathrm{HC}(\mathcal{T})$ for every $t>0$ because $T_t^n=T_{nt}$.
Whether the converse implication $\mathrm{HC}(\mathcal{T})\subseteq \mathrm{HC}(T_t)$ is true for some $t>0$, has been first answered partially by Oxtoby and Ulam \cite{OxtobyUlam}.
We also refer to \cite[Theorem 7.22]{Erdmanbook} for a proof.

\begin{Theorem*}[Oxtoby-Ulam]
	If $\mathcal{T}=(T_t)_{t\geq0}$ is a $C_0$-semigroup on a separable space $X$ and
	$x \in \mathrm{HC}(\mathcal{T})$, then there is a dense $G_\delta$-set $J \subseteq (0,+\infty)$ such $x\in \mathrm{HC}(T_t)$ for any $t \in J$.
\end{Theorem*}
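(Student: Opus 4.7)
The plan is to apply the Baire category theorem to a countable intersection of open dense sets. Fix a countable base of non-empty open sets $(U_m)_{m\geq1}$ for the topology of $X$ (the spaces relevant to our applications are second countable, so this follows from separability). For each $m$ I would set
\[
J_m := \set*{t>0 : \exists\, n\geq0,\ T_{nt}x\in U_m} = \bigcup_{n\geq0}\set*{t>0 : T_{nt}x\in U_m}.
\]
Since $t\mapsto T_{nt}x$ is continuous for each $n$ by strong continuity of $\mathcal{T}$, each set in the inner union is open, hence so is $J_m$. The intersection $J:=\bigcap_m J_m$ is therefore a $G_\delta$-set, and by the choice of the base it equals $\set*{t>0 : x\in \mathrm{HC}(T_t)}$. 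The remaining task is to prove that each $J_m$ is dense in $(0,+\infty)$, after which the Baire category theorem closes the proof.

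To establish density, fix a non-empty open interval $(a,b)\subseteq(0,+\infty)$ and abbreviate $U=U_m$. The key step is to verify that
\[
A := \set*{s>0 : T_s x\in U}
\]
is an open \emph{unbounded} subset of $(0,+\infty)$. Openness is immediate from strong continuity. The hard part---and the main obstacle of the proof---is unboundedness: one must show that the tail orbit $\set*{T_r x:r\geq s_0}$ is dense in $X$ for every $s_0>0$, equivalently that $T_{s_0}x$ is itself a hypercyclic vector of~$\mathcal{T}$. I would derive this from the Birkhoff transitivity theorem in the continuous setting: on a separable Baire space, the existence of a hypercyclic vector of $\mathcal{T}$ forces topological transitivity and makes $\mathrm{HC}(\mathcal{T})$ a dense $G_\delta$-set invariant under every $T_s$, so $T_{s_0}x\in\mathrm{HC}(\mathcal{T})$ and its orbit (which is the tail orbit above) is dense.

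Once the unboundedness of $A$ is in hand, the conclusion follows by a short arithmetic argument. Choose an integer $n_0>a/(b-a)$; then $(n+1)a<nb$ for every $n\geq n_0$, so the intervals $(na,nb)$ overlap consecutively and
\[
\bigcup_{n\geq n_0}(na,nb)=(n_0 a,+\infty).
\]
By unboundedness of $A$ I can pick $s\in A$ with $s>n_0 a$, and then find $n\geq n_0$ with $s\in(na,nb)$; setting $t:=s/n\in(a,b)$ gives $T_{nt}x=T_s x\in U_m$, so $t\in J_m\cap(a,b)$. This proves density of $J_m$ and concludes the argument.
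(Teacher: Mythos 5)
Your overall scheme is exactly the one the paper leans on: the statement is quoted with a pointer to \cite[Theorem 7.22]{Erdmanbook}, and the paper itself re-runs precisely this Baire-category argument (countable base $U_m$, the open sets $J_m$, the overlapping intervals $(na,nb)$ with $n_0b>(n_0+1)a$, then Baire) in its proof of Theorem~\ref{ContDiscRiem}. So the architecture and the covering trick are fine and agree with the source.

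The step you need to repair is the one you yourself flag as the crux: unboundedness of $A=\{s>0:T_sx\in U\}$. As written, your justification is circular. The Birkhoff transitivity theorem (in its continuous version) only converts \emph{topological transitivity} into the existence of a dense $G_\delta$ of hypercyclic vectors; it does not say that a single hypercyclic vector forces transitivity, nor that $\mathrm{HC}(\mathcal{T})$ is invariant under each $T_s$. For a $C_0$-semigroup, both of those assertions are usually \emph{proved from} the density of tail orbits $\{T_rx:r\geq s_0\}$ --- which is exactly what you are trying to establish --- so quoting them here assumes the conclusion. The standard way to close the gap is direct: $\{T_rx:0\leq r\leq s_0\}$ is compact (continuous image of a compact interval by strong continuity), and a compact subset of the ambient space has empty interior, because a topological vector space in which some compact set has interior is locally compact, hence finite-dimensional, and on a non-trivial finite-dimensional space there is no hypercyclic $C_0$-semigroup. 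Hence the tail orbit, which is the dense full orbit with a subset of a nowhere dense set removed, is still dense; this gives the unboundedness of $A$ (and, retroactively, the facts $T_{s_0}x\in\mathrm{HC}(\mathcal{T})$ and transitivity that you wanted to cite). Note that in the paper's own concrete use of this argument (Theorem~\ref{ContDiscRiem}) the issue never arises, since Voronin's theorem supplies visiting times of positive lower density, hence arbitrarily large ones, for free. With the compactness remark inserted, your proof is complete and matches the cited one.
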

Whether one has $\mathrm{HC}(\mathcal{T})\subseteq \mathrm{HC}(T_t)$ for every $t>0$, it has been answered recently by Conejero, M\"uller and Peris \cite{ConejeroMuellerPeris}.
\begin{Theorem*}[Conejero-M\"uller-Peris]
If $\mathcal{T}=(T_t)_{t\geq0}$ is a $C_0$-semigroup of operators that is locally equicontinuous,
then $\mathrm{HC}(\mathcal{T})\subseteq\mathrm{HC}(T_h)$ and $\mathrm{FHC}(\mathcal{T})\subseteq\mathrm{FHC}(T_h)$ for any $h>0$.
\end{Theorem*}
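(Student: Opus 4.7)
The plan is to adapt the Baire-category proof of the Oxtoby-Ulam theorem. It cannot be applied as a black box, because the orbit $\{T_\tau\zeta:\tau\ge 0\}$ under the translation semigroup $\mathcal{T}=(T_t)_{t\ge 0}$ on the separable Fr\'echet space $H(\mathcal{D})$ is not dense in $H(\mathcal{D})$ (it avoids functions with zeros in $\mathcal{D}$). However, continuous universality asserts exactly that this orbit meets every set of the form $V_{K,f,\varepsilon}:=\{g\in H(\mathcal{D}):\max_{s\in K}|g(s)-f(s)|<\varepsilon\}$ with $(K,f,\varepsilon)^*$ admissible, and this is all the Oxtoby-Ulam argument needs.

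I would fix a countable family $(W_n)_{n\ge 1}$ of such $V_{K_n,f_n,\varepsilon_n}$ with the property that every $V_{K,f,\varepsilon}$ (with admissible $(K,f,\varepsilon)^*$) contains some $W_n$; this exists by the second-countability of $H(\mathcal{D})$. Set
\[
J_n:=\{t>0 : \exists\, k\ge 1,\ T_{kt}\zeta\in W_n\}.
\]
Openness of $J_n$ in $(0,\infty)$ is immediate from continuity of $t\mapsto T_{kt}\zeta$. For density, given $(a,b)\subseteq(0,\infty)$, I need a visit $T_{\tau_0}\zeta\in W_n$ at some $\tau_0>ab/(b-a)$: then $(\tau_0/b,\tau_0/a)$ has length exceeding $1$, so it contains an integer $k$, and $t:=\tau_0/k\in(a,b)\cap J_n$. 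Such a large $\tau_0$ is obtained by excising the compact set $\{T_\tau\zeta:0\le\tau\le M\}$ from $W_n$; since compact sets have empty interior in the infinite-dimensional Fr\'echet space $H(\mathcal{D})$, the remainder is still a nonempty open set containing a non-vanishing function and hence some admissible-tuple open set, to which continuous universality applies and yields a visit at time $>M$. Baire's theorem now gives $J:=\bigcap_n J_n$ dense $G_\delta$ in $(0,\infty)$, and for $t_0\in J$ the discrete orbit meets every $W_n$ and hence every $V_{K,f,\varepsilon}$; iterating the excision with the finite set of earlier orbit points $\{T_{k_it_0}\zeta:i\le j\}$ produces an infinite strictly increasing sequence $(n_k)$ of visit times (using injectivity of $k\mapsto T_{kt_0}\zeta$).

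The main obstacle I anticipate is verifying the excision step in the density argument: for any open $W\subseteq H(\mathcal{D})$ containing a non-vanishing function and any compact $C\subseteq H(\mathcal{D})$ of empty interior, one must exhibit an admissible $(K',f',\varepsilon')^*$ with $V_{K',f',\varepsilon'}\subseteq W\setminus C$. This will be arranged by picking a small multiplicative perturbation $f_n e^{\delta h}$ of $f_n$, which is non-vanishing on $\mathcal{D}$ and close to $f_n$ when $\delta$ is small, and then choosing $\varepsilon'$ small enough that $V_{K',f',\varepsilon'}$ is disjoint from $C$; this simultaneously keeps the target function non-vanishing (as required by the admissibility with a star) and puts the new neighborhood inside the excised region, which is the precise form in which continuous universality can be invoked.
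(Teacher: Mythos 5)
There is a genuine gap: what you prove is not the stated theorem. The Conejero--M\"uller--Peris theorem is a general statement about an arbitrary locally equicontinuous $C_0$-semigroup on a topological vector space, and its whole point is that the inclusions $\mathrm{HC}(\mathcal{T})\subseteq\mathrm{HC}(T_h)$ and $\mathrm{FHC}(\mathcal{T})\subseteq\mathrm{FHC}(T_h)$ hold for \emph{every} $h>0$. Your Baire-category scheme (countable base $(W_n)$, open dense sets $J_n$ of step sizes, $J=\bigcap_n J_n$) can only ever produce a \emph{residual} set of good step sizes $t_0$, i.e.\ the Oxtoby--Ulam conclusion; it says nothing about an arbitrarily prescribed $h$, which might well lie outside the dense $G_\delta$-set your argument constructs. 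In fact what you have written is essentially the proof of Theorem \ref{ContDiscRiem} of the paper (the unconditional, weaker statement), specialized moreover to the translation semigroup acting on $\zeta$ in $H(\mathcal{D})$ rather than to a general semigroup, so it is a proof of a different and strictly weaker result. The actual proof of the Conejero--M\"uller--Peris theorem requires a different mechanism altogether: one exploits that the hypercyclic vector approximates itself and runs a connectedness/continuity argument involving the vectors $x+\lambda T_\tau x$ with unimodular $\lambda$ and $\tau$ in suitable intervals (this is exactly the ingredient highlighted in the Remark before the proof of Theorem \ref{ContDiscRiem}, and it is what fails when one tries to bypass the Riemann hypothesis). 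None of this appears in your proposal.

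A second omission: even in the residual-set form, your argument does not touch the frequent-hypercyclicity inclusion $\mathrm{FHC}(\mathcal{T})\subseteq\mathrm{FHC}(T_h)$. Passing from positive lower density of $\{\tau: T_\tau x\in U\}$ in $[0,T]$ to positive lower density of $\{n: T_{nh}x\in U\}$ in $\{1,\dots,N\}$ is a quantitative transfer that a Baire-category argument cannot deliver, since category arguments only yield existence of visit times, never their density. (As a side remark, in the zeta setting your excision of the compact piece of orbit is also unnecessary: positive lower density of the visit set already forces unboundedly large visit times, which is how the paper's proof of Theorem \ref{ContDiscRiem} gets density of the sets $J_m$ directly from Voronin's theorem.) If your goal is the theorem as stated, you need to follow the Conejero--M\"uller--Peris argument itself rather than adapt Oxtoby--Ulam.
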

We close the section with the notion of strong recurrence.
A vector $x\in X$ will be called {\it strongly recurrent} for the operator $T$, respectively for the $C_0$-semigroup $\mathcal{T}$, if for every open set $U\subseteq X$ with $x\in U$ 
\[
\limsup_{N\to\infty}\frac{\#\left\{n\leq N:T^nx\in U\right\}}{N}>0,
\]
respectively
\[\limsup_{T\to\infty}\frac{\mathrm{meas}\left\{t\in[0,T]:T_tx\in U\right\}}{T}>0.
\]
There is a connection between continuous and discrete version of strong recurrence which was established  by Gottschalk and Hedlund \cite[Theorem 2]{GottschalkHedlund}.

\begin{Theorem*}[Gottschalk-Hedlund]
	Let $\mathcal{T}=(T_t)_{t\geq0}$ be a $C_0$-semigroup of operators and assume that $h>0$. 
	Then a vector $x\in X$ is strongly recurrent for $\mathcal{T}$ if and only if is strongly recurrent for $T_h$.
\end{Theorem*}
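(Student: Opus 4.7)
The plan is to prove each direction separately. In both cases the main step is an elementary counting argument comparing Lebesgue measure on $[0,T]$ with counting measure on $h\mathbb{Z}_{\geq0}\cap[0,Nh]$, with the local equicontinuity of the $C_0$-semigroup at the origin serving as the bridge between the two scales. The key preparatory observation, automatic on Banach and Fréchet spaces (and in particular for $X=H(\mathcal{D})$), is that for every open $U\ni x$ there exist an open $V\ni x$ with $V\subseteq U$ and a real $\delta\in(0,h]$ such that $T_sy\in U$ for every $y\in V$ and every $s\in[0,\delta]$.

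For the direction from $T_h$-strong recurrence to $\mathcal{T}$-strong recurrence I fix an open $U\ni x$, choose $V,\delta$ as above, and apply the discrete hypothesis to $V$: the set $B=\{n\geq0:T_{nh}x\in V\}$ has positive upper density. For every $n\in B$ the defining property of $V$ gives $T_tx\in U$ for all $t\in[nh,nh+\delta]$; since $\delta\leq h$ these intervals are pairwise disjoint, so
\[
\meas\{t\in[0,(N+1)h]:T_tx\in U\}\;\geq\;\delta\cdot\#(B\cap[0,N]),
\]
and the continuous upper density is at least $(\delta/h)$ times the discrete one, hence positive.

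For the converse I fix an open $U\ni x$ and produce $V,\delta$ as before; writing $R_V:=\{t\geq0:T_tx\in V\}$, the continuous hypothesis supplies $c:=\limsup_{T\to\infty}\meas(R_V\cap[0,T])/T>0$. The backward counterpart of the local equicontinuity statement is that whenever $t\in R_V$ lies in a strip $[nh-\delta,nh]$ for some $n\geq 1$, the semigroup identity $T_{nh}x=T_{nh-t}(T_tx)$ together with $nh-t\in[0,\delta]$ forces $T_{nh}x\in U$. Consequently
\[
\#\{n\leq N:T_{nh}x\in U\}\;\geq\;\#\{n\leq N:R_V\cap[nh-\delta,nh]\neq\varnothing\},
\]
and the remaining task is to show that the upper density of the right-hand side is positive.

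The main obstacle lies precisely in this last step: without additional input, the positive-density mass of $R_V$ might concentrate in the complementary strips $\bigcup_n(nh,nh+h-\delta)$ and altogether miss the $\delta$-strips $\bigcup_n[nh-\delta,nh]$. The original Gottschalk-Hedlund argument surmounts this obstruction by a shift-and-average device: integrating in a phase parameter $\tau\in[0,h)$ yields the Fubini identity
\[
\int_0^h\#\{n\leq N:T_{nh+\tau}x\in V\}\,d\tau\;=\;\meas(R_V\cap[0,(N+1)h]),
\]
from which a reverse Fatou lemma extracts a positive-measure set of shifts $\tau$ whose base points $T_\tau x$ enjoy positive upper discrete density of returns to $V$; a further use of continuity and of the semigroup law then transfers the conclusion back to the unshifted vector $x$, after which the elementary counting argument above finishes the proof.
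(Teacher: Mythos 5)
The paper does not actually prove this statement: it is quoted as a known result of Gottschalk and Hedlund with a citation, so there is no in-text argument to compare yours against and I can only judge the proposal on its own terms. Your first direction is sound: on a Fr\'echet space such as $H(\mathcal{D})$ local equicontinuity does give an open $V\ni x$ and $\delta\in(0,h]$ with $T_s y\in U$ for $y\in V$, $s\in[0,\delta]$, and the disjoint-interval count $\meas\{t\in[0,(N+1)h]:T_tx\in U\}\geq\delta\,\#\{n\leq N:T_{nh}x\in V\}$ indeed transfers positive upper density from the time-$h$ map to the semigroup. The Fubini identity and the reverse Fatou step in the converse direction are also correct as far as they go: they produce a set $E\subseteq[0,h)$ of positive measure of phases $\tau$ such that $\{n:T_{nh+\tau}x\in V\}$ has positive upper density.

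The genuine gap is the sentence ``a further use of continuity and of the semigroup law then transfers the conclusion back to the unshifted vector $x$''. To convert a visit $T_{nh+\tau}x\in V$ into a statement about the orbit of $x$ at times in $h\mathbb{N}$, you must flow \emph{forward} from time $nh+\tau$ by some $s\in[0,\delta]$ and land exactly on a multiple of $h$; this forces $s=h-\tau$, so the device only works for phases $\tau\in[h-\delta,h)$, and nothing in your argument places any point of $E$ in that window. The obstruction you identified for $R_V$ has merely been relocated to the set of good phases. Continuity cannot bridge distinct phases, because the relevant times $nh$ are unbounded and the family $(T_{nh})_{n\geq1}$ is not equicontinuous; and even when the semigroup is invertible (as vertical translation on $H(\mathcal{D})$ is), writing $T_{nh}x=T_{-\tau}\bigl(T_{nh+\tau}x\bigr)$ only puts $T_{nh}x$ in $T_{-\tau}(V)$, which is a neighbourhood of $T_{-\tau}x$, not of $x$. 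So the final ``transfer'' is not a routine continuity argument: it is precisely the hard content of the continuous-to-discrete implication, and your outline leaves it unproved.
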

The notion of strong recurrence in the context of zeta- and $L$-functions has been realized by Bagchi in his thesis \cite{Bagchithesis} and then in subsequent work in \cite{Bagchi1982, Bagchi1987}.
\begin{Theorem*}[Bagchi]
The following statements are equivalent:
\begin{enumerate}
	\item The Riemann hypothesis is true.
	\item For any admissible $(K,\zeta,\varepsilon)$
	\[
	\liminf_{T\to\infty}\frac{1}{T}\mathrm{meas}\left\{\tau\in[0,T]:\max_{s\in K}|\zeta(s+i\tau)-\zeta(s)|<\varepsilon\right\}>0.
	\]
	\item Statement $(2)$ holds with $\limsup$ in place of $\liminf$.
\end{enumerate}
\end{Theorem*}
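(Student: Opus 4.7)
My plan is to establish the cyclic chain $(1) \Rightarrow (2) \Rightarrow (3) \Rightarrow (1)$; the middle step is immediate since $\liminf\leq\limsup$, so the content lies in the other two implications.

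For $(1)\Rightarrow(2)$ I would invoke the Bohr--Jessen/Bagchi probabilistic framework. Set $\Omega:=\prod_{p}\set*{z\in\CC:\abs{z}=1}$ equipped with its normalised Haar measure $\Prob$, and for $\omega=(\omega_p)_p\in\Omega$ define
\[
\zeta(s,\omega):=\prod_{p}\parentheses*{1-\omega_p p^{-s}}^{-1},\qquad s\in\mathcal{D}.
\]
Standard arguments show that this converges $\Prob$-almost surely in $H(\mathcal{D})$ and that its distribution $\mu$ on $H(\mathcal{D})$ has full support equal to $H^*(\mathcal{D})$; the support identification rests on the $\QQ$-linear independence of $\set*{\log p:p\text{ prime}}$, mean-square estimates for truncated Euler products in $\sigma>1/2$, and a Mergelyan-type approximation. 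The flow $t\mapsto\zeta(\cdot+it,\omega)$ is conjugate via $t\mapsto(p^{-it})_p$ to the Kronecker flow on $\Omega$, which is uniquely ergodic. Coupling unique ergodicity with the $L^2$-approximation of $\zeta(\cdot+it)$ by partial Euler products on compact subsets of $\mathcal{D}$, one obtains, for every admissible $(K,g,\varepsilon)^*$,
\[
\liminf_{T\to\infty}\frac{1}{T}\meas\set*{\tau\in[0,T]:\max_{s\in K}\abs{\zeta(s+i\tau)-g(s)}<\varepsilon}\geq\mu\parentheses*{U_{K,g,\varepsilon}}>0,
\]
where $U_{K,g,\varepsilon}:=\set*{h\in H(\mathcal{D}):\max_{s\in K}\abs{h(s)-g(s)}<\varepsilon}$ is an open neighbourhood of $g\in H^*(\mathcal{D})=\operatorname{supp}\mu$. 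Under $(1)$, $\zeta$ itself lies in $H^*(\mathcal{D})$, so choosing $g:=\zeta$ delivers $(2)$.

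For $(3)\Rightarrow(1)$ I argue by contraposition. Were RH to fail, the functional equation would produce a zero $\rho_0=\beta_0+i\gamma_0$ of $\zeta$ with $1/2<\beta_0<1$. Fix $r>0$ small enough that $K:=\set*{s\in\CC:\abs{s-\rho_0}\leq r}\subset\mathcal{D}$ and $\rho_0$ is the only zero of $\zeta$ in $K$, put $m:=\min_{\abs{s-\rho_0}=r}\abs{\zeta(s)}>0$ and $\varepsilon:=m/2$. The tuple $(K,\zeta,\varepsilon)$ is admissible, so $(3)$ supplies a measurable set $A\subseteq\RR_+$ of positive upper density with $\max_{s\in K}\abs{\zeta(s+i\tau)-\zeta(s)}<m/2$ for every $\tau\in A$. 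On $\partial K$ this yields $\abs{\zeta(s)-\zeta(s+i\tau)}<m/2\leq\abs{\zeta(s)}$, so Rouché's theorem forces $\zeta(\cdot+i\tau)$ to have a zero in the interior of $K$; equivalently, $\zeta$ has a zero inside $K+i\tau$. Since each zero of $\zeta$ lies in $K+i\tau$ for $\tau$ in an interval of length at most $2r$, the set $A\cap[0,T]$ of Lebesgue measure $\gg T$ accounts for $\gg T$ distinct zeros of $\zeta$ with $\Re s\geq\beta_0-r>1/2$ and ordinate in $[0,T+\LandauO(1)]$, contradicting the classical density estimate $N(\sigma,T)=o(T)$ for any fixed $\sigma>1/2$.

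The hard part is unquestionably the analytic-probabilistic step $(1)\Rightarrow(2)$: identifying the support of $\mu$ as $H^*(\mathcal{D})$, and, more delicately, transferring the ergodic limit from the random model $\zeta(\cdot,\omega)$ to the deterministic shift $\zeta(\cdot+it)$, both require uniform control of mean-square errors $\int_{0}^{T}\abs{\zeta(\sigma+it)-\zeta_N(\sigma+it)}^2\dd{t}$ between $\zeta$ and its truncated Euler products $\zeta_N$ for every $\sigma>1/2$. By contrast, the Rouché-with-zero-density argument in $(3)\Rightarrow(1)$ is conceptually transparent once $N(\sigma,T)=o(T)$ is in hand.
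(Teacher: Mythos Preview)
Your proposal is correct and follows the same cyclic chain $(1)\Rightarrow(2)\Rightarrow(3)\Rightarrow(1)$ as the paper's sketch: $(1)\Rightarrow(2)$ by applying universality with target $g=\zeta$ (admissible under RH since $\zeta\in H^*(\mathcal{D})$), $(2)\Rightarrow(3)$ trivially, and $(3)\Rightarrow(1)$ via Rouch\'e's theorem combined with the zero-density estimate $N(\sigma,T)=o(T)$. The only cosmetic difference is that you unpack the Bohr--Jessen/Bagchi probabilistic machinery behind Voronin's theorem, whereas the paper simply invokes Voronin's theorem as a known result.
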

It should be noted that  Bagchi proves only $(1)\Leftrightarrow(3)$ but the statement cited mostly in the relevant literature is $(1)\Leftrightarrow(2)$.
We give a sketch of the proof. 
The implication $(1)\Rightarrow(2)$ follows from Voronin's theorem while $(2)\Rightarrow(3)$ is obvious.
Lastly, assuming that $(3)$ holds but $(1)$ does not, it would imply by Rouch\'e's theorem the existence of at least $cT$ many zeros in the rectangle $1/2<\sigma<1$, $0<t<T$, for some $c>0$ and infinitely many $T$.
But these would contradict classic zero-density estimates of $\zeta(s)$ in the half-plane $\sigma>1/2$ (see for example \cite[Chapter IX]{Titchbook}).

\section{Proofs of the main results}\label{Proofs}
Before giving the proofs we construct the space $X$ and the one-parameter family of operators $\mathcal{T}$ which are connected to the universality of zeta-functions.

Let $(K_n)_{n\geq1}$ be an exhaustion of $\mathcal{D}=\left\{s\in\mathbb{C}:1/2<\sigma<1\right\}$ by compact sets, that is $K_n\subseteq K_{n+1}\subseteq \mathcal{D}$, $n\geq1$, and for every compact set $K\subseteq U$, there is $N\geq1$ such that $K\subseteq K_N$.
Since $\mathcal{D}$ is simply connected we can construct $K_n$ to have connected complement.
We  equip $H(\mathcal{D})=\left\{f:\mathcal{D}\to\mathbb{C}\mid f\text{ is analytic}\right\}$ with the sequence of norms
\[
p_n(f):=\max_{s\in K_n}|f(s)|,\quad f\in H(\mathcal{D}),\quad n\geq1,
\]
and it becomes a (separable) Fr\'echet space, i.e. a complete metric space with the metric
\[
d(f,g):=\sum_{n\geq1}\frac{1}{2^n}\min\left(1,p_n(f-g)\right),\quad f,g\in H(\mathcal{D}).
\]
Lastly, we define the sequence of {\it translation operators}  $\mathcal{T}:=(T_\tau)_{\tau\geq0}$ by
\[
T_\tau f:=f(\cdot+i\tau), \quad f\in H(\mathcal{D}),\quad\tau\geq0,
\]
and it can be quickly verified that it is a well-defined $C_0$-semigroup on $H(\mathcal{D})$.
The family is also locally equicontinuous because $H(\mathcal{D})$ is a Fr\'echet space.

\begin{proof}[Proof of Theorem \ref{ContDiscHurw}]
	Let $h>0$.
	If $U$ is an open subset of $H(\mathcal{D})$, then there is $n\geq1$ and an analytic function $h:\mathcal{D}\to\mathbb{C}$ such that
	\[
	V:=\left\{g\in H(\mathcal{D}):\max_{s\in K_n}|g(s)-h(s)|<\frac{1}{n}\right\}\subseteq U.
	\]
If the Hurwitz zeta-function  is continuous strongly universal for some fixed parameter $\alpha\in(0,1]$, then
	\[
	\liminf_{T\to\infty}\frac{1}{T}\mathrm{meas}\left\{\tau\in[0,T]:\max_{s\in K_n}|\zeta(s+i\tau;\alpha)-h(s)|<\frac{1}{n}\right\}>0,
	\] 
	which implies in the language of linear dynamics that
	\[
	\liminf_{T\to\infty}\frac{1}{T}\mathrm{meas}\left\{\tau\in[0,T]:T_\tau\zeta(\cdot;\alpha)\in U\right\}>0.
	\]
	Since $U$ is arbitrary and $\zeta(\cdot;\alpha)$ is an element of $H(\mathcal{D})$, we get that $\zeta(\cdot;\alpha)\in\mathrm{FHC}(\mathcal{T})$.
	In view of the Conejero-M\"uller-Peris theorem it follows that also $\zeta(\cdot;\alpha)\in\mathrm{FHC}(T_h)$.
	
	Let now $(K,f,\varepsilon)$ be admissible.
	By Mergelyan's theorem there is a polynomial $P(s)$ with
	\[
	\max_{s\in K}|P(s)-f(s)|<\frac{\varepsilon}{2}.
	\]
	On the other hand 
	\[
	W:=\left\{g\in H(\mathcal{D}):\max_{s\in K}|g(s)-P(s)|<\frac{\varepsilon}{2}\right\}
	\]
	is an open subset of $H(\mathcal{D})$.
	Since  $\zeta(\cdot;\alpha)\in\mathrm{FHC}(T_h)$, we have that
	\[
	\liminf_{N\to\infty}\frac{1}{N}\#\left\{n\leq N:T^n_h\zeta(\cdot;\alpha)\in W\right\}.
	\]
	Therefore, by the triangle inequality and  the equation $T^n_h=T_{hn}$ we conclude that
	\[
	\liminf_{N\to\infty}\frac{1}{N}\#\left\{n\leq N:\max_{s\in K}|\zeta(s+ihn;\alpha)-f(s)|<\varepsilon\right\}>0.
	\]
	Hence, $\zeta(s;\alpha)$ is $h$-discrete strongly universal as well.
	
	Now we assume that the converse statement is true, i.e. the Hurwitz zeta-function is $h$-discrete strongly universal.
	If $U\subseteq H(\mathcal{D})$ is open with $\zeta(\cdot,\alpha)\in U$, then by assumption
	\[
	\liminf_{N\to\infty}\frac{1}{N}\left\{n\leq N:T_h^n\zeta(\cdot;\alpha)\in U\right\}.
	\]
	Hence, $\zeta(\cdot;\alpha)$ is strongly recurrent for the operator $T_h$ and, consequently, for the family $\mathcal{T}$ as can be seen by the Gottshalk-Hedlund theorem.
	
If $(K,f,\varepsilon)$ is admissible, then the $h$-discrete strong universality implies that there is integer $N$ such that
	\begin{align}\label{discretaprr}
	\max_{s\in K}|\zeta(s+ihN;\alpha)-f(s)|<\frac{\varepsilon}{2}.
	\end{align}
	Since $\zeta(\cdot;\alpha)$ is strongly recurrent for $\mathcal{T}$, we know that
	\[
	\limsup_{T\to\infty}\frac{1}{T}\mathrm{meas}\left\{\tau\in[0,T]:\max_{s\in ihN+K}|\zeta(s+i\tau;\alpha)-\zeta(s;\alpha)|<\frac{\varepsilon}{2}\right\}>0.
	\]
	From the triangle inequality and \eqref{discretaprr}, we obtain now the second statement of the theorem.
\end{proof}
\begin{proof}[Proof of Theorem \ref{ContDiscRiemacon}]
	Let $h>0$.
	It should be noted that  if $(K,f,\varepsilon)$ is admissible ($f$ may also have zeros), then
	\begin{align}\label{logRie}
	\liminf_{T\to\infty}\frac{1}{T}\mathrm{meas}\left\{\tau\in[0,T]:\max_{s\in K}|\log\zeta(s+i\tau)-f(s)|<\varepsilon\right\}.
	\end{align}
	For, we know that hypothetical zeros of $\zeta(s)$ off the vertical line $1/2+i\mathbb{R}$ are distributed sparser as we move higher in the strip $\mathcal{D}$.
	Therefore, for any compact set $K$, it is possible to show that the measure of those $\tau\in[0,T]$ such that $i\tau+K$ does not contain any zeros of $\zeta(s)$, is asymptotically equal to $T$.
	On the other hand, Voronin's theorem implies that the set of $\tau$ such that $\zeta(s+i\tau)$ is close to $e^{f(s)}$ has positive lower density.
	Relation \eqref{logRie} follows by combining these two facts.
	
	It is only now that we have to assume the Riemann hypothesis in order to ensure that $\log\zeta\in H(\mathcal{D})$.
	We then have the same setting as for the Hurwitz zeta-function in the previous theorem.
	The continuous universality of $\zeta(s)$ implies the continuous strong universality of $\log\zeta(s)$ which in turn (by the Conejero-M\"uller-Peris theorem) implies the $h$-discrete strong universality of $\log\zeta(s)$ and by exponentiation the $h$-discrete universality of $\zeta(s)$.
	
	For the converse statement however, we employ additionally Bagchi's theorem after applying the Gottshalk-Hedlund theorem. 
	Of course it is again essential to ensure that $\log\zeta\in H(\mathcal{D})$ as follows from the Riemann hypothesis.
	By repeating now the same steps as in the previous proof but moving from $\zeta(s)$ to its logarithm and  preserving the $\liminf$ notation, we obtain that  the $h$-discrete universality of $\zeta(s)$ implies its continuous universality. 
\end{proof}

\begin{Remark*}
One of the main arguments in the proof of the Conejero- M\"uller-Peris theorem is that the (frequently) hypercyclic vector can approximate itself in the space $X$.
This may not be possible in our case unless we assume the Riemann hypothesis in which case Voronin's theorem is applicable.
Another approach relies on working in a slit half-plane $\Omega$ which is  defined as the whole $\mathcal{D}$ without the segments $(1/2+i\gamma,\beta+i\gamma]$ for any zero $\rho=\beta+i\gamma$ of $\zeta(s)$.
Since $\zeta(1+it)\neq0$, $t\in\mathbb{R}$, $\Omega$ is simply connected and we can show as in the case of $H(\mathcal{D})$, that $H(\Omega)$ is a Fr\'echet space with the induced topology.
The disadvantage of this approach is that the family $\mathcal{T}$ of vertical shifts in not anymore well-defined in $H(\Omega)$.
Nevertheless, if we leave aside the notion of operators then Voronin's theorem implies the strong recurrence of $\zeta(s)$ in the space $H(\Omega)$ (in the form given by Bagchi).
However, later in the proof of the Conejero-M\"uller-Peris theorem, a certain continuity argument is needed that involves the elements $x+\lambda T_\tau x$, $x\in HC(\mathcal{T})$, $\lambda\in\mathbb{C}$ and $\tau$ from some suitable interval.
This argument does not seem that can be adjusted to our case if we drop some defining properties of a $C_0$-semigroup.
Instead, to prove the weaker statement of Theorem \ref{ContDiscRiem}, we modify the proof of \cite[Theorem 7.22]{Erdmanbook}, where no self-approximation is needed.
\end{Remark*}
\begin{proof}[Proof of Theorem \ref{ContDiscRiem}]
	Let $U_m$, $m\geq1$, be an enumeration of the sets
	\[
	\left\{f\in H^*(\mathcal{D}):\max_{s\in K_N}|f(s)-e^{P(s)}|<\frac{1}{N}\right\},\quad N\geq1, \,P\in\mathbb{Q}[X],
	\]
which form a countable base of $H^*(\mathcal{D})$ and for each $m\geq1$ define
	\begin{align*}
		J_m=\left\{\tau\in(0,+\infty):\exists n\in\mathbb{N}\text{ with }\max_{s\in K_N}|\zeta(s+in\tau)-e^{P(s)}|<\frac{1}{N}\right\},
	\end{align*}
	which is an open subset of $(0,+\infty)$.

	By Voronin's theorem it follows that $J_m$ is dense.
For, if $0<a<b<+\infty$, then there is $n_0\in\mathbb{N}$ such that $n_0b>(n_0+1)a$ and, thus, $(n_0a,+\infty)\subseteq\cup_{n\geq n_0}(na,nb)$.
The universality theorem implies that $T_s\zeta\in U_m$ for (infinitely many) $s\geq n_0$.
Hence, $s\in(na,nb)$ for some $n\geq n_0$ and if we set $t_0=s/n$, then $t_0\in J_m\cap(a,b)$.

Therefore, the Baire category theorem yields that the set
	\[
	J:=\bigcap_{m\geq1}J_m
	\]
is a dense $G_\delta$-set in $(0,+\infty)$ whose elements satisfy the desired property of $\zeta(s)$.
\end{proof}
\section{Multidimensional case}
There is no real advantage on considering only $\zeta(s)$ or $\zeta(s;\alpha)$ since any other function defined in a similar space as $H(\mathcal{D})$ could take their place.
In fact, we introduce the following generalization, which has also been realized by Bagchi \cite{Bagchi1987}:

Let $N\in\mathbb{N}$, $\mathbf{h}:=(h_1,\dots,h_N)$ be a vector of positive real numbers and $\sigma_{1n}<\sigma_{2n}$, $n\leq N$.
If $\mathcal{D}_n:=\left\{s\in\mathbb{C}:\sigma_{1n}<\sigma<\sigma_{2n}\right\}$, $n\leq N$,  and $\mathcal{C}:=\prod_{n\leq N}\mathcal{D}_n$ is their Cartesian product, then $H(\mathcal{C})$ is a separable Fr\'echet space (with the product topology) and the family of operators $\mathcal{T}_\mathbf{h}=(T_\tau)_{\tau\geq0}$ defined by
\[
T_\tau(f_1,\dots,f_N)=(f_1(\cdot+ih_1\tau),\dots,f_N(\cdot+ih_N\tau)),\quad(f_1,\dots,f_N)\in H(\mathcal{C}),\quad\tau\geq0,
\]
is a well-defined $C_0$-semigroup.
Lastly, $H^*(\mathcal{C})\subseteq H(\mathcal{C})$ will consist of the vectors whose entries are zero-free functions.

A vector of {\bf zeta-functions} $(\zeta_1,\dots,\zeta_N)\in H(\mathcal{C})$ will be called:
\begin{enumerate}
	\item {\it continuous jointly  universal} if for any admissible $(K_n,f_n,\varepsilon)^*$, $n\leq N$,
	\begin{align}\label{jointunivers}
	\liminf_{T\to\infty}\frac{1}{T}\mathrm{meas}\left\{\tau\in[0,T]:\max_{n\leq N}\max_{s\in K_i}|\zeta_n(s+ih_n\tau)-f_n(s)|<\varepsilon\right\}>0;
	\end{align}
	\item {\it continuous joint strongly universal} if for any admissible $(K_n,f_n,\varepsilon)$, $n\leq N$, relation \eqref{jointunivers} holds;
%	\item {\it continuous joint strongly recurrent} if for any admissible $(K_n,\zeta_n,\varepsilon_n)$, $n\leq N$, relation \eqref{jointunivers} holds with $\limsup$ in place of $\liminf$;
	\item {\it $h$-discrete jointly universal/joint strongly universal} if in \eqref{jointunivers} we substitute the continuous variable $\tau\in[0,T]$ with a discrete variable $hm$, $m\in[0,T]\cap\mathbb{N}$, and the Lebesgue measure notation $\mathrm{meas}$ with the notation of the cardinality of a set $\#$.
\end{enumerate}
	With the above notations, we can adjust the proofs from the previous sections to obtain similar results for the multidimensional case.
\begin{Theorem}\label{multidimensional}
	If $(\zeta_1,\dots,\zeta_N)\in H(\mathcal{C})$ is continuous joint strongly universal, then it is $h$-discrete joint strongly universal for every $h>0$.
	If $(\zeta_1,\dots,\zeta_N)\in H^*(\mathcal{C})$ is continuous jointly universal, then it is $h$-discrete joint universal for every $h>0$.
	If $(\zeta_1,\dots,\zeta_N)\in H(\mathcal{C})$ is continuous jointly universal, then there is a dense $G_\delta$-set $J\subseteq(0,+\infty)$ such that if $t_0\in J$, then for any admissible tuple $(K_n,f_n,\varepsilon)^*$, $n\leq N$, there is a sequence $(n_k)_{k\geq1}\subseteq\mathbb{N}$ such that
	\[
	\max_{n\leq N}\max_{s\in K_n}|\zeta_n(s+it_0n_k)-f_n(s)|<\varepsilon,\quad k\geq1.
	\]
\end{Theorem}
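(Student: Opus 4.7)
The plan is to recycle the three arguments of Section~\ref{Proofs} in the product Fr\'echet space $H(\mathcal{C}) = \prod_{n\leq N} H(\mathcal{D}_n)$ equipped with the locally equicontinuous $C_0$-semigroup $\mathcal{T}_{\mathbf{h}}$. The starting observation is that a basic open neighbourhood in the product topology of $H(\mathcal{C})$ has the form
\[
\prod_{n\leq N}\left\{g_n \in H(\mathcal{D}_n) : \max_{s\in K_n} |g_n(s)-h_n(s)| < \delta\right\},
\]
so the conditions of joint (strong) universality translate exactly to (frequent) hypercyclicity of the tuple $(\zeta_1,\ldots,\zeta_N)$ for $\mathcal{T}_{\mathbf{h}}$ or its time-$h$ iterate $T_h$.

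For the first assertion, continuous joint strong universality is the statement $(\zeta_1,\ldots,\zeta_N) \in \mathrm{FHC}(\mathcal{T}_{\mathbf{h}})$. The Conejero-M\"uller-Peris theorem upgrades this to $\mathrm{FHC}(T_h)$ for every $h>0$, and then Mergelyan approximation of each admissible $f_n$ by a polynomial $P_n$ on $K_n$, combined with the triangle inequality against the corresponding basic open set, yields the $h$-discrete joint strong universality just as in the proof of Theorem~\ref{ContDiscHurw}.

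For the second assertion, since $(\zeta_1,\ldots,\zeta_N) \in H^*(\mathcal{C})$ has zero-free entries on the simply connected strips $\mathcal{D}_n$, a fixed branch of $\log\zeta_n$ defines a genuine element of $H(\mathcal{D}_n)$. As in Theorem~\ref{ContDiscRiemacon}, continuous joint universality of the zeta tuple (with non-vanishing targets $f_n$) is equivalent to continuous joint strong universality of the logarithm tuple (with arbitrary targets $g_n$, via $f_n = e^{g_n}$); apply the first assertion to the logarithms and exponentiate.

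For the third assertion, the zeta entries may vanish in $\mathcal{C}$, so logarithms no longer live in $H(\mathcal{C})$, and we imitate Theorem~\ref{ContDiscRiem}. Enumerate a countable base $(U_m)_{m\geq 1}$ of $H^*(\mathcal{C})$ by products of basic open sets $\{g \in H(\mathcal{D}_n) : \max_{s \in K_M}|g(s) - e^{P(s)}| < 1/M\}$ indexed by $M \geq 1$ and $P \in \mathbb{Q}[X]$, and set
\[
J_m = \{\tau > 0 : \exists\, n \in \mathbb{N} \text{ with } T_{n\tau}(\zeta_1,\ldots,\zeta_N) \in U_m\}.
\]
Continuous joint universality supplies infinitely many scales $s > 0$ with $T_s(\zeta_1,\ldots,\zeta_N) \in U_m$, and the covering $(n_0 a, +\infty) \subseteq \bigcup_{n\geq n_0}(na, nb)$ shows that each $J_m$ is open and dense in $(0,+\infty)$; the Baire category theorem then yields the required $J = \bigcap_m J_m$. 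No essentially new ideas are needed beyond the one-dimensional case; the only genuinely delicate bookkeeping is the first paragraph's identification of the product topology on $H(\mathcal{C})$ with the tuple-wise uniform approximation condition in the definition of joint (strong) universality, after which everything transcribes directly.
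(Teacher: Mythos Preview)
Your proposal is correct and matches the paper's approach exactly: the paper's entire proof of this theorem is the single sentence ``With the above notations, we can adjust the proofs from the previous sections to obtain similar results for the multidimensional case,'' and your three paragraphs carry out precisely those adjustments of Theorems~\ref{ContDiscHurw}, \ref{ContDiscRiemacon}, and \ref{ContDiscRiem} in the product space $H(\mathcal{C})$. The only point worth flagging is that in the second assertion the passage from joint universality of $(\zeta_n)$ to joint strong universality of $(\log\zeta_n)$ hides the same branch-of-logarithm bookkeeping that the paper already glosses over in the proof of Theorem~\ref{ContDiscRiemacon}; this is standard and not a gap relative to the paper's level of detail.
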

\section{Zeta-functions}
The reason we highlighted the term ``zeta-functions'' is because in analytic number theory there is no rigorous definition of what a zeta- or an $L$-function should look like.
They are usually considered  as Dirichlet series
\[
L(s)=\sum_{n\geq1}\frac{a_n}{e^{\lambda_ns}},\quad\sigma>\sigma_0,
\]
where $(a_n)_{n\geq1}\subseteq\mathbb{C}$ and $(\lambda_n)_{n\geq1}\subseteq\mathbb{R}$ are sequences of number-theoretic interest (for example $\lambda_n=\log n$ and $a_n=1$) and $\sigma_0<\infty$ is the abscissa of absolute convergence.
Naturally,  $L(s)$ can not be universal in the half-plane $\sigma>\sigma_0$.
If it can, however, be analytically continued to a vertical strip $\sigma_1<\sigma<\sigma_0$ with the exception of finitely many poles, then it will most likely be universal as well.
Here we can make two distinctions.
If $L(s)$ has also an Euler product representation
\[
L(s)=\prod_{n\geq1}\parentheses*{1-\frac{b_n}{e^{\mu_n s}}}^{-1},\quad\sigma>\sigma_0,
\]
for some $(b_n)_{n\geq1}\subseteq\mathbb{C}$ and $(\mu_n)_{n\geq1}\subseteq\mathbb{R}$, then similar zero-density estimates as in the case of $\zeta(s)$ could possibly be attained, allowing us to say that $L(s)$ can not be strongly universal.
If, on the other hand, such representation does not exist, then $L(s)$ will be strongly universal.

A good candidate of a zeta-function having an Euler product representation are elements from the so-called {\it Selberg class} introduced by Selberg \cite{Selberg1992old}. 
For instance, the Riemann zeta-function, Dirichlet $L$-functions, Dedekind zeta-functions and Hecke $L$-functions belong to this class.
For a detailed survey we refer to \cite{Steudingbook}, while sufficient conditions for an $L(s)$ from this class to be universal are given in \cite{Nagoshisteuding}.
If additionally we assume some sort of independence between elements $L_1,\dots,L_N$ of the Selberg class, then we can also have joint universality.
For example if $\chi_1,\dots,\chi_N$ are Dirichlet characters that are pairwise nonequivalent, then the associated Dirichlet $L$-functions $(L(s,\chi_1),\dots, L(s,\chi_N))$ are jointly universal.
This was proved independently by Bagchi \cite{Bagchithesis,Bagchi1982},  Gonek \cite{Gonekthesis} and Voronin \cite{Voroninthesis}.
A more general framework in the context of the Selberg class is given in \cite{Leenakamura2017}.

Zeta-functions without an Euler product are usually occurring when $\lambda_n=\log\kappa_n$ with $\kappa_n\in\mathbb{R}_+\setminus\mathbb{N}$ or when they can be expressed as a linear combination of two or more zeta-functions which have an Euler product.
A classic example of the first case is the Lerch zeta-function \cite{Laurincikasbook} which is a generalization of the Hurwitz zeta-function, while an example of the second case are Dirichlet series with periodic coefficients \cite{Steuding2002}.
In both cases we have strong universality while in \cite{Bayart2023,Laurincikas2003} a more general framework is given.
If a tuple of such zeta-functions has some sort of independence between them, then they will also be joint strongly universal \cite{Javtokas2008,Lee2017}.

We only presented a selection of results.
Their discrete analogues can also be found in the literature.
On the other hand, Theorem \ref{multidimensional} implies in a strong or a weaker sense that studying the continuous universality may suffice.
\bibliographystyle{plainnat}
\bibliography{bibliography}
\end{document}